\begin{document}

\title{}

\title[\hfilneg \hfil Almost automorphic solution]
{Almost automorphy and Riccati Equation}

\author[Indira Mishra \hfil \hfilneg]
{ Indira Mishra \\
The Institute of Mathematical Sciences, Chennai\\
CIT Campus, Taramani, Chennai, 600 113. India.}

\address{Indira Mishra \newline
The Institute of Mathematical Sciences, Chennai
CIT Campus, Taramani, Chennai, 600 113. India.}

\email{indira.mishra1@gmail.com, indiram@imsc.res.in}

\date{}
\subjclass[2000]{34 K06, 34 A12, 37 L05}
\keywords{Almost automorphic function, Riccati equation, Exact controllability,
Evolution equation; hyperbolic semigroups, interpolation spaces.}

\begin{abstract}
In this paper we first consider a linear time invariant systems with almost periodic
forcing term. We propose a new deterministic quadratic control problem, motivated by
Da-Prato. With the help of associated degenerate Riccati equation we study the existence
and uniqueness of an almost automorphic solutions.
\end{abstract}

\maketitle \numberwithin{equation}{section}
\newtheorem{theorem}{Theorem}[section]
\newtheorem{lemma}[theorem]{Lemma}
\newtheorem{example}[theorem]{Example}
\newtheorem{proposition}[theorem]{Proposition}
\newtheorem{corollary}[theorem]{Corollary}
\newtheorem{remark}[theorem]{Remark}
\newtheorem{definition}[theorem]{Definition}

\vskip .3cm \noindent
\section{ \textbf{Introduction}}
The study of periodic systems, periodic optimization problems and the existence of
almost periodic solutions to quadratic control problems have been quite interesting
topic to work for many Mathematicians for e.g. \cite{pr1, a1, pr2, s1, v1} and
\cite{pr3}.
In \cite{pr2}, Da-Prato has considered the quadratic control problem for the periodic
systems in infinite dimension and showed that optimal control is given by feedback
control which involves periodic solution of Riccati equation.

In \cite{pr1}, Da-Prato and Ichikawa considered the following system
\begin{eqnarray}
y'=Ay+Bu+f \label{0}
\end{eqnarray}
where $u\in L^2_{ap}(U), \ U$ and $Y$ are real separable Hilbert spaces, $B\in
\mathcal{L}(U,Y)$ and $f\in L^2_{ap}(Y)$.
Under the assumptions that the pair $(A,B)$ is stabilizable and $(M,A)$ is detectable,
they studied the existence of almost periodic solutions for (\ref{0})
and minimized the cost functional:
\begin{eqnarray}
J(u)=\lim_{T\rightarrow \infty}\frac{1}{T}\int_0^T (|My|^2+|u|^2)dt \nonumber
\end{eqnarray}
over the set of all controls $u$ such that (\ref{0}) has an almost periodic
mild solution.

In this paper we study the same problem as considered in \cite{pr1}, by weakening
their hypothesis. The conditions $(A,B)$ stabilizable $(M,A)$ detectable is somehow
more demanding, whereas similar results as in \cite{pr1} can be obtained with
the assumption on only one pair that the pair $(-A,B)$ is exactly controllable.
We generalize their results \cite{pr1} for the case of almost automorphic
solutions that is we wish to minimize the cost functional considered over the set of
all controls such that equation (\ref{0}), with almost automorphic forcing term has
an almost automorphic solution.

Almost automorphy is the generalization of almost periodicity. The notion of almost
automorphic functions were introduced by S. Bochner \cite{boch} in relation to some
aspects of differential geometry.

Let $Y$ and $U$ be Hilbert spaces.
We consider the following differential equation:
\begin{eqnarray}
\dot{y}=Ay+Bu+f \label{eq1}\\
y(0)=y_0 \nonumber
\end{eqnarray}
where $A$ is the infinitesimal generator of semigroup $(T(t))_{t\ge 0}$ on $Y$. Let
$B\in \mathcal{L}(U,Y), \ u\in L^2_{aa}(Y)$ and $f\in L^2_{aa}(Y)$ where
$L^2_{aa}(Y)$ is the completion of $AA(Y)$ with respect
to the inner product
$$\lim_{T\rightarrow \infty}\frac{1}{T}
\int_0^T \langle f(t),g(t)\rangle dt$$
defined on $AA(Y)$, which is discussed in detail in Section \ref{s3}.

We can not always expect $J(u)$ to be finite. We wish to minimize the average cost
functional defined by
\begin{eqnarray}
J(u)=\lim_{T\rightarrow \infty}\frac{1}{T}\int_0^T \Big(|My|^2+|u|^2\Big)dt \label{1}
\end{eqnarray}
over $\mathcal{U}_{ad}$.
\begin{eqnarray}
\mathcal{U}_{ad}=\{u\in L^2_{ap}(Y): \ \lim_{T\rightarrow \infty}\frac{1}{T}\int_0^T
\Big(|My|^2+|u^2|\Big)dt<\infty \} \nonumber
\end{eqnarray}
where $M\in \mathcal{L}(Y)$. We write (\ref{1}) as
\begin{eqnarray}
J(u)=|My|^2_{aa}+|u|^2_{aa}. \nonumber
\end{eqnarray}

The optimization problem (\ref{eq1}) with cost functional
(\ref{1}) is more general than the following optimal control problem.
\begin{eqnarray}
\dot{y}=Ay+Bu \label{eq2}\\
y(0)=y_0 \nonumber
\end{eqnarray}
Define
\begin{eqnarray}
J(u)=\frac{1}{2}\int_0^\infty |My|^2+\frac{1}{2}\int_0^\infty |u|^2 \label{2}
\end{eqnarray}
The linear regulator problem is to minimize $J(u)$ where $(y,u)$ satisfy (\ref{eq2}).
\\

\noindent {\bf Finite Cost Condition:} For every $y_0\in Y, \ u\in L^2(0,\infty;U)$
such that $J(u)$ defined in (\ref{2}) is finite. and $y$ satisfy (\ref{eq2}).
Then we know by \cite{zab} that there exists a bounded linear operator $P=P^* \ge 0$
which satisfies the following operator algebraic Riccati equation:
\begin{eqnarray}
A^* P+PA-PBB^* P+M^* M=0 \nonumber
\end{eqnarray}
In this paper we are interested in making use of degenerate algebraic Riccati equation.
That is there exists $P=P^*\ge 0$ in $\mathcal{L}(Y)$ such that
\begin{eqnarray}
A^* P+PA-PBB^* P=0 \nonumber
\end{eqnarray}

\section{Preliminaries} \label{s2}
In this Section we recall some basic definitions, Lemmas and results
going to be used in the sequel. Let $(Y,\|\cdot\|)$ be a Hilbert space.
\subsection{Almost Periodic Functions}
\begin{definition}
A subset $S \ \mbox{of} \ \mathbb{R}$ is called relatively dense if
there exists a positive number $l>0,$ such that $[a,a+l]\cap S\neq
\phi,$ for all $a\in \mathbb{R}.$
\end{definition}
\begin{definition} see \cite{fink},
We say that a function $f$ is Bohr almost periodic, if for every
$\epsilon>0,$ the set $T_{f,\epsilon}$ given by
$$T_{f,\epsilon}=\{\tau : \sup_{t\in \mathbb{R}}
|f(t+\tau)-f(t)|<\epsilon\},$$ is relatively dense for all $t\in
\mathbb{R}$.
\end{definition}
We denote by $AP(X),$ the set of all almost periodic functions
defined on $X.$ The space $(AP(X),\|\cdot\|_{AP(X)})$ is a Banach
space with the supremum norm given by,
\begin{eqnarray}
\|u\|_{AP(X)}=\sup_{t\in \mathbb{R}}\|u\|_X. \nonumber
\end{eqnarray}

\subsection{Almost automorphic functions}
\begin{definition}
A continuous function $f:\mathbb{R}\rightarrow Y$ is called almost automorphic
if for every sequence $(s'_n)_{n\in \mathbb{N}}$ of real numbers,
there exists a subsequence $(s_n)_{n\in \mathbb{N}}$ such that
\begin{eqnarray}
\lim_{n,m\rightarrow \infty}\|f(t+s_n-s_m)-f(t)\|=0. \nonumber
\end{eqnarray}
\end{definition}
We denote by $AA(Y)$ the set of all almost automorphic functions, defined on $Y$.
\begin{remark}
$AA(Y)$ with respect to the $\sup$-norm is a Banach space.
\end{remark}
\begin{example}
Concrete examples of almost automorphic functions are:
\begin{eqnarray}
a(t)=\cos\bigg(\frac{1}{\cos t+\cos(\sqrt 2t)}\bigg)~\mbox{and}~
~b(t)=\sin\bigg(\frac{1}{\sin t+\sin(\sqrt 5t)}\bigg) \nonumber
\end{eqnarray}
$t\in \mathbb{R}$.
\end{example}
\subsection{Hyperbolic Semigroups}
Let $(Y, \|\cdot\|)$ be a Hilbert space.
If $A$ be a linear operator on $Y$, then $\rho(A), \ \sigma(A), \ D(A),
 \ R(A), \ \ker(A)$ denote the resolvent, spectrum, domain, range and kernel of
the operator $A$. Let $Y_1, \ Y_2$ be two Hilbert spaces, then
$\mathcal{L}(Y_1,Y_2)$ is the Banach space of bounded linear operators from
$H_1$ into $H_2$, when $Y_1=Y_2$ this is simply denoted by $\mathcal{L}(Y)$.
\begin{definition} See \cite{toka}.
A linear operator $A: D(A)\subset Y\rightarrow Y$ (not necessarily densely
defined) is said to be sectorial if the following hold:
there exist constants $\zeta\in \mathbb{R}, \ \theta\in (\frac{\pi}{2},\pi)$
and $M>0$ such that $S_{\theta,\zeta}\subset \rho(A)$,
\begin{eqnarray}
S_{\theta,\zeta}:=\{\lambda\in \mathbb{C}: \lambda\neq \zeta, \
\arg(\lambda-\zeta)<\theta\}, \nonumber\\
\|R(\lambda,A)\|\le \frac{M}{|\lambda-\zeta|}, \quad
\lambda\in S_{\theta,\zeta} \nonumber
\end{eqnarray}
where $R(\lambda,A)=(\lambda I-A)^{-1}$ for each $\lambda\in \rho(A)$.
\end{definition}
\begin{remark} See \cite{toka}.
If the operator $A$ is sectorial, then it generates an analytic semigroup
$(T(t))_{t\ge 0}$, which maps $(0,\infty)$ into $\mathcal{L}(Y)$ and such
that there exists constants $M_0, M_1>0$ such that
\begin{eqnarray}
&&\|T(t)\|\le M_0e^{\zeta t}, \quad t>0\\
&&\|t(A-\zeta I)T(t)\|\le M_1e^{\zeta t}, \quad t>0
\end{eqnarray}
\end{remark}
\begin{definition} See \cite{toka}.
A semigroup $(T(t))_{t\ge 0}$ is said to be hyperbolic if there exist
projection $\Pi_s$ and constants $M, \ \delta>0$ such that $T(t)$
commutes with $\Pi_s, \ \ker \Pi_s$ is invariant with respect to $T(t), \
T(t): R(\Pi_u)\rightarrow R(\Pi_u)$ is invertible and
\begin{eqnarray}
\|T(t)\Pi_sx\|\le M e^{-\delta t}\|x\|, \ t\ge 0\\
\|T(t)\Pi_ux\| \le Me^{\delta t}\|x\|, \ t\le 0
\end{eqnarray}
where $\Pi_u:=I-\Pi_s$ and for $t\le 0, \ T(t):=(T(-t))^{-1}$.
\end{definition}

\begin{remark}
The existence of a hyperbolic semigroup on a Banach space $X$ gives us a
nice algebraic information about this vectorial space. In fact let
$(T(t))_{t\ge 0}$ be a hyperbolic semigroup on $X$. Then there are $(T(t))_{t\ge 0}$
invariant closed subspaces $X_s$ and $X_u$ such that $X=X_s\oplus X_u$.
Furthermore the restricted semigroups $(T_s(t))_{t\ge 0}$ on $X_s$ and
$(T_u(t))_{t\ge 0}$ on $X_u$ have the following properties:
\begin{itemize}
\item[(i)] The semigroup $(T_s(t))_{t\ge 0}$ is uniformly exponentially
stable on $X_s$.
\item[(ii)] The operators $T_u(t)$ are invertible on $X_u$ and $(T_u(t)^{-1})_{t\ge 0}$
is uniformly exponentially stable on $X_u$.
\end{itemize}

\end{remark}
We recall that analytic semigroup $(T(t))_{t\ge 0}$ associated with the linear
operator $A$ is hyperbolic if and only if $\sigma(A)\cap{i\mathbb{R}}=\phi$.

\begin{definition} See \cite{toka}.
For $\alpha\in (0,1)$ Banach space $(Y_\alpha, \|\cdot\|_\alpha)$ is said to
an intermediate space between $D(A)$ and $Y$ if
$D(A)\subset Y_\alpha\subset Y$
and there is a constant $c>0$ such that
\begin{eqnarray}
[x]_\alpha=\sup_{0\le t\le 1}\|t^{1-\alpha}(A-\zeta I)e^{-\zeta t}T(t)x\|<\infty, \nonumber
\end{eqnarray}
with the norm
\begin{eqnarray}
\|x\|_\alpha=\|x\|+[x]_\alpha \label{eq}
\end{eqnarray}
and the abstract Holder spaces $D_A(\alpha):=\overline{D(A)}^{\|\cdot\|_\alpha}$.
\end{definition}
\begin{lemma} See \cite{toka}. \label{l2.8}
For a hyperbolic analytic semigroup $(T(t))_{t\ge 0}$ there exists constants $C(\alpha)>0,
\delta>0, \ M(\alpha)>0$ and $\gamma>0$ such that
\begin{eqnarray}
&&\|T(t)\Pi_ux\|_\alpha\le c(\alpha)e^{\delta t}\|x\| \mbox{ \ for \ } t\le 0\\
&&\|T(t)\Pi_sx\|_\alpha \le M(\alpha)t^{-\alpha}e^{-\gamma t}\|x\| \mbox{ \ for \ } t>0.
\end{eqnarray}
\end{lemma}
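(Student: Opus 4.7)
The plan is to prove the two estimates separately, exploiting in each case the smoothing property of the analytic semigroup combined with the hyperbolic dichotomy.

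For the stable bound, I would start from the definition
\[
\|T(t)\Pi_s x\|_\alpha = \|T(t)\Pi_s x\| + \sup_{0<s\le 1} s^{1-\alpha}\,\|(A-\zeta I)e^{-\zeta s}T(s)T(t)\Pi_s x\|.
\]
The first summand is already controlled by $Me^{-\delta t}\|x\|$ from the hyperbolicity assumption. For the second, I use the analyticity estimate $\|(A-\zeta I)T(r)\|\le M_1 r^{-1}e^{\zeta r}$ applied at $r=s+t$, together with the fact that $\Pi_s$ commutes with the semigroup. I would then split on the size of $t$. When $0<t\le 1$, a case distinction on whether $s\le t$ or $s\ge t$ shows that $s^{1-\alpha}/(s+t)\le t^{-\alpha}$ throughout $s\in(0,1]$, which produces the singular factor $t^{-\alpha}$. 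When $t>1$, I instead write $T(t)\Pi_s=T(1)T(t-1)\Pi_s$, use that $T(1)\in\mathcal L(Y,Y_\alpha)$, and combine with $\|T(t-1)\Pi_s x\|\le Me^{-\delta(t-1)}\|x\|$ to obtain a pure exponential bound. Glueing the two ranges gives the estimate with decay rate $\gamma\in(0,\delta)$ (the slight loss absorbs the factor $e^{\zeta s}$ coming from the smoothing estimate).

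For the unstable bound, the key point is that the restriction $T_u(t)$ of the semigroup to $R(\Pi_u)$ extends to a uniformly continuous group. Because the analytic generator $A$ has spectrum that splits as $\sigma_s\cup\sigma_u$ with $\sigma_u$ separated from $i\mathbb R$ and lying on the bounded side of the dichotomy, the part $A|_{R(\Pi_u)}$ is a bounded operator. Therefore the graph norm, and a fortiori the intermediate norm $\|\cdot\|_\alpha$ defined through $(A-\zeta I)T(s)$, is equivalent to $\|\cdot\|$ on $R(\Pi_u)$. Applying the hyperbolic bound $\|T(t)\Pi_u x\|\le Me^{\delta t}\|x\|$ for $t\le 0$ then delivers $\|T(t)\Pi_u x\|_\alpha\le c(\alpha)e^{\delta t}\|x\|$.

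I expect the main obstacle to be the careful bookkeeping in the stable case: combining a singularity $t^{-\alpha}$ at the origin with exponential decay at infinity into a single clean inequality, while making sure that $\gamma$ can be chosen strictly positive after paying for the $e^{\zeta s}$ factor in the smoothing estimate and the constant loss in $T(1)\in\mathcal L(Y,Y_\alpha)$. The unstable side is conceptually easier but hinges on the norm-equivalence argument just outlined, which quietly uses the spectral decomposition afforded by analyticity.
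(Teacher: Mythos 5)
The paper does not prove this lemma at all: it is imported verbatim from \cite{toka} with the tag ``See \cite{toka}'', so there is no in-paper argument to compare yours against. Your sketch is a correct, self-contained derivation along the standard lines. The key computations check out: for the stable part, the elementary inequality $s^{1-\alpha}/(s+t)\le t^{-\alpha}$ (split on $s\le t$ versus $s\ge t$) combined with $\|(A-\zeta I)T(s+t)\|\le M_1(s+t)^{-1}e^{\zeta(s+t)}$ does produce the $t^{-\alpha}$ singularity on $0<t\le 1$, where the factor $e^{\zeta t}$ is harmless; and for $t>1$ the factorization $T(t)\Pi_s=T(1)T(t-1)\Pi_s$ with $T(1)\in\mathcal L(Y,Y_\alpha)$ (itself an immediate consequence of the same smoothing estimate, since $s+1\ge 1$ removes the singularity) gives the exponential tail, at the cost of shrinking $\delta$ to some $\gamma\in(0,\delta)$ to absorb the polynomial factor $t^{\alpha}$. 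For the unstable part, the one point that genuinely needs the analyticity is the one you identify: $\sigma_u=\sigma(A)\cap\{\operatorname{Re}\lambda\ge 0\}$ is compact because the spectrum of a sectorial operator lies in a left-opening sector, so $\Pi_u$ is the Riesz projection, $R(\Pi_u)\subset D(A)$ with $A\Pi_u$ bounded, and hence $[x]_\alpha\le\sup_{0<s\le 1}s^{1-\alpha}e^{-\zeta s}\|(A-\zeta I)T(s)\Pi_u\|\,\|x\|\le C\|x\|$ on $R(\Pi_u)$, giving the norm equivalence you invoke. This matches the intended content of the cited result, and I see no gap.
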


\section{Quadratic Control Problem} \label{s3}
In this section we consider linear systems described by a strongly continuous semigroup
and solve quadratic control problems.

Let $Y$ be a real separable Hilbert space with inner product $\langle,\rangle$
and norm $|\cdot|$. Let $f(t)$ be a continuous function in $Y$ and $f$ is said to be
almost automorphic if for every sequence $(s_n')_{n\in \mathbb{N}}$ of real numbers
there exists a subsequence $(s_n)_{n\in \mathbb{N}}$ such that $f(t)=\lim_{n\rightarrow
\infty} f(t+s_n-s_m)$. Almost automorphic functions are the generalization of almost
periodic functions. An almost periodic function is almost automorphic but the converse
is not true. For example:
\begin{eqnarray}
f(t):=\sin\Big(\frac{1}{\sin t+\sin \sqrt{2}t}\Big) \nonumber
\end{eqnarray}
is almost automorphic but not almost periodic. We make a note that almost
automorphic functions are bounded. It is easy to see that set of all almost
automorphic functions (scalar functions) $AA(\mathbb{R})$ forms an algebra.
The set of all almost automorphic functions defined on $Y$ will be denoted by
$AA(Y)$. Let $f,g\in AA(Y)$. Then the mean value
$\lim_{T\rightarrow \infty}\int_0^T|f(t)|^2dt$ exists.
Thus the limit
$$\lim_{T\rightarrow \infty}\frac{1}{T}
\int_0^T \langle f(t),g(t)\rangle dt$$
defines an inner product on $AA(Y)$
which we denote by $\langle f,g \rangle_{aa}$. The corresponding norm
$|\cdot|_{aa}$. Let $L^2_{aa}(Y)$ be the completion of $AA(Y)$ with respect
to this inner product.

Now we consider the differential equation:
\begin{eqnarray}
y'=Ay+f \label{q}
\end{eqnarray}
where $A$ is the infinitesimal generator of strongly continuous semigroup $e^{tA}$
on $Y$ and $f\in AA(Y)$. If $Y=\mathbb{R}^n$ and $A\in \mathbb{R}^{n\times n}$
then $e^{tA}$ is the usual matrix exponential function. If $A$ is stable that is
$e^{tA}$  is exponentially stable, then
\begin{eqnarray}
y(t)=\int_{-\infty}^t e^{(t-s)A}f(s)ds \label{q1}
\end{eqnarray}
is well defined and is almost automorphic. In fact let $(a_n)$ be an arbitrary
sequence then
\begin{eqnarray}
y(t+a_n)&=&\int_{-\infty}^{t+a_n} e^{(t+a_n-s)A}f(s)ds \nonumber\\
&=&\int_{-\infty}^t e^{(t-\tau)A} f(t+a_n)d\tau \nonumber
\end{eqnarray}
In general $y(t)$ given by (\ref{q1}) does not satisfy (\ref{q}) but we can
find a subsequence $f_n\in AA(Y)$ such that $f_n\rightarrow f$ uniformly in $AA(Y)$
and $y_n(t)=\int_{-\infty}^t e^{(t-s)A}f_n(s)ds$
is the solution of (\ref{q}) with $f=f_n$, uniformly converging to $y$ in $AA(Y)$.
A continuous function $y$ on $\mathbb{R}$ is called mild solution of (\ref{q}) if
\begin{eqnarray}
y(t)=e^{(t-s)A}y(s)+\int_s^t e^{(t-r)A}f(r)dr \nonumber
\end{eqnarray}
for any $t\ge s$. Then $y(t)$ given by (\ref{q1}) is unique mild solution of (\ref{q})
in $AA(Y)$. In fact if $z$ is another solution, then
\begin{eqnarray}
z(t)-y(t)=e^{(t-s)A}[z(s)-y(s)] \nonumber
\end{eqnarray}
Letting $s\mapsto -\infty$ and using the fact that $z$ and $y$ are bounded, we obtain
that $z(t)-y(t)=0$ for any $t$. Letting $s\rightarrow -\infty$ and noting that
$z,y$ are bounded, we obtain $z(t)-y(t)=0$ for any $t$.

Moreover if $e^{tA}$ satisfies exponential dichotomy that is there exists a projection
operator $\pi$ such that
\begin{itemize}
\item[(i)] $Y_1:= \Pi Y\subset D(A)$ and $A_1:=A\Pi$ is a bounded operator on $Y_1$
with
\begin{eqnarray}
|e^{-tA_1}|\le M_1 e^{-a_1 t}, \ t\ge 0 \mbox{ \ for \ some \ } M_1>0, \ a_1>0. \nonumber
\end{eqnarray}
\item[(ii)] $A:D(A)\cap Y_2\rightarrow Y_2,$ where $Y_2:=(I-\Pi)Y$ and $A_2:=A(I-\Pi)$
generates an exponentially stable semigroup on $Y_2$. Then
\begin{eqnarray}
y(t)=\int_{-\infty}^t e^{(t-s)A_2}(I-\Pi)f(s)ds-\int_t^\infty e^{(t-s)A_1}\Pi f(s)ds\nonumber
\end{eqnarray}
is a unique mild solution of (\ref{q}) in $AA(Y)$.
\end{itemize}
\section{Almost automorphic solutions}
We consider the following system:
\begin{eqnarray}
y'=Ay+Bu+f  \label{m1}
\end{eqnarray}
It is not clear that there exists any admissible control $u$
for problem (\ref{eq1}) which minimizes (\ref{2}) over $\mathcal{U}_{ad}$.
Due to presence of $f$, it is reasonable to assume the existence of some
$F\in \mathcal{L}(Y,U)$ such that $A-BF$ is stable and the feedback law is
given by
\begin{eqnarray}
u=-Fy+v, \quad v\in L^2_{aa}(U) \nonumber
\end{eqnarray}
is admissible and in fact
\begin{eqnarray}
y(t)=\int_{-\infty}^t e^{(t-s)(A-BF)}\Big(Bv(s)+f(s)\Big)ds \nonumber
\end{eqnarray}
is the unique mild solution of (\ref{eq1}) in $AA(Y)$.

If we assume that $-A$ is exponentially stable and that the pair $(-A,B)$ is exactly
controllable, then it follows by \cite[Theorem 2.4, pp. 212]{zab} that there exists
a constant $\beta>0$ such that for arbitrary $y\in Y$, we have
\begin{eqnarray}
\int_0^\infty\|B^*e^{-tA^*}y\|^2_Udt\ge \beta\|y\|^2_Y. \nonumber
\end{eqnarray}
We have the following result due to Kesavan \cite{kes1}.
\begin{theorem} \label{t3.1}
Let $U$ and $Z$ be the Hilbert spaces. Let $A:D(A)\subset Z\rightarrow Z$ be
the infinitesimal generator of a $C_0$-semigroup. Let $B\in \mathcal{L}(U,Z)$.
Then the following are equivalent.\\
(i) $-A$ is the exponentially stable and there exists $\alpha>0$ such that
for all $z\in Z$,
\begin{eqnarray}
\int_0^\infty\|B^*e^{-tA^*}z\|^2_Udt\ge \alpha\|z\|^2_Z. \nonumber
\end{eqnarray}
(ii) There exists $P\in \mathcal{L}(Z)$ which is a solution of the degenerate
Riccati equation (\ref{eq3})
\begin{eqnarray}
A^* P+PA-PBB^* P=0 \label{eq3}
\end{eqnarray}
which has the following properties.
\begin{itemize}
\item[(a)] $P=P^*\ge 0$;
\item[(b)] $P$ is invertible;
\item[(c)] $A-BB^*P$ is exponentially stable.
\end{itemize}
\end{theorem}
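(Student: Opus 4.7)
The plan is to prove each implication by using the controllability Gramian of $-A$ as the inverse of the Riccati solution $P$.

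For (i) $\Rightarrow$ (ii), I would first define
\begin{eqnarray}
Q:=\int_0^\infty e^{-tA}BB^* e^{-tA^*}dt, \nonumber
\end{eqnarray}
which converges in $\mathcal{L}(Z)$ because $-A$ is exponentially stable and $B\in \mathcal{L}(U,Z)$. A standard differentiation argument (using that $(d/dt)e^{-tA}BB^*e^{-tA^*} = -Ae^{-tA}BB^*e^{-tA^*} - e^{-tA}BB^*e^{-tA^*}A^*$ and that the integrand vanishes at infinity) yields the Lyapunov identity $AQ+QA^* = BB^*$. The controllability estimate in (i) rewrites as $\langle Qz,z\rangle = \int_0^\infty \|B^*e^{-tA^*}z\|_U^2\,dt \ge \alpha\|z\|_Z^2$, which together with $Q=Q^*\ge 0$ shows that $Q$ is invertible with $Q^{-1}\in \mathcal{L}(Z)$. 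Set $P:=Q^{-1}$. Multiplying $AQ+QA^*=BB^*$ on the left and right by $P$ produces $A^*P+PA = PBB^*P$, i.e.~(\ref{eq3}). Clearly $P=P^*\ge 0$ and $P$ is invertible, giving (a) and (b).

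For (c), I would use the Riccati equation itself: from $A^*P+PA = PBB^*P$ we obtain $P(A-BB^*P) = -A^*P$, and since $P$ is invertible,
\begin{eqnarray}
A-BB^*P = -P^{-1}A^*P. \nonumber
\end{eqnarray}
Thus $A-BB^*P$ is similar (via $P$) to $-A^*$. The exponential stability of the semigroup generated by $-A$ transfers to its adjoint $-A^*$, and similarity transformations preserve exponential stability, hence $A-BB^*P$ is exponentially stable.

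For (ii) $\Rightarrow$ (i), I would reverse the same similarity argument: the Riccati equation and invertibility of $P$ again give $A-BB^*P = -P^{-1}A^*P$, so from (c) the operator $-A^*$, and consequently $-A$, generates an exponentially stable semigroup. Setting $Q:=P^{-1}\in \mathcal{L}(Z)$ and multiplying (\ref{eq3}) on both sides by $Q$ yields $AQ+QA^* = BB^*$. By the uniqueness of the bounded Lyapunov solution when $-A$ is exponentially stable, $Q$ coincides with $\int_0^\infty e^{-tA}BB^*e^{-tA^*}dt$. Consequently,
\begin{eqnarray}
\int_0^\infty \|B^*e^{-tA^*}z\|_U^2\,dt = \langle Qz,z\rangle = \langle P^{-1}z,z\rangle \ge \|P\|^{-1}\|z\|_Z^2, \nonumber
\end{eqnarray}
which is the estimate in (i) with $\alpha = \|P\|^{-1}$.

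The main obstacle I expect is justifying the Lyapunov computations in the infinite-dimensional setting: specifically, that $AQ+QA^* = BB^*$ holds in the sense that $Q$ maps into $D(A^*)$ with $A^*Q$ extending to a bounded operator, together with uniqueness of the bounded solution to the Lyapunov equation when $-A$ is the generator of an exponentially stable $C_0$-semigroup. Once these standard semigroup facts are in hand (they appear in Curtain--Zabczyk, which the paper already cites), the rest of the argument is the algebraic manipulation above.
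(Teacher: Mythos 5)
The paper does not actually prove Theorem \ref{t3.1}; it is quoted from Kesavan and Raymond \cite{kes1} with no argument supplied, so there is no in-paper proof to compare against. Your strategy --- take $Q=\int_0^\infty e^{-tA}BB^*e^{-tA^*}\,dt$, read hypothesis (i) as coercivity of $Q$, set $P=Q^{-1}$, convert the Lyapunov identity $AQ+QA^*=BB^*$ into the Riccati equation, and obtain stability of $A-BB^*P$ from the similarity $A-BB^*P=-P^{-1}A^*P$ --- is exactly the mechanism of the cited source, and the algebra in both directions, including the bound $\langle P^{-1}z,z\rangle\ge \|P\|^{-1}\|z\|_Z^2$, is correct.

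There is, however, one gap in (ii) $\Rightarrow$ (i) that is not covered by the technicalities you flag at the end. The weak Riccati equation gives, for $w\in D(A)$, that $Pw\in D(A^*)$ and $A^*Pw=-P(A-BB^*P)w$; this shows only that $-A^*$ \emph{extends} the generator $G:=P(A-BB^*P)P^{-1}$ (with domain $PD(A)$), not that it equals it. Writing ``$A-BB^*P=-P^{-1}A^*P$, so $-A^*$, and consequently $-A$, generates an exponentially stable semigroup'' therefore assumes the conclusion: assertion (i) presupposes that $-A$ generates a semigroup at all, and to get that from the inclusion you must show $PD(A)=D(A^*)$. The standard shortcut (compare resolvents at some $\lambda\in\rho(G)$ with $-\lambda\in\rho(A)$) is not free here, since $-\lambda$ may lie in $\sigma(A)$ for every admissible $\lambda$. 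A workable repair: from $-A\subseteq H:=P^{-1}(A^*-PBB^*)P$, with $H$ generating the exponentially stable semigroup $R(t)=P^{-1}e^{t(A^*-PBB^*)}P$, differentiate $s\mapsto R(s)T(s)w$ for $w\in D(A)$ to get $R(s)T(s)=I$; the adjoint of the same computation shows $T(s)$ has dense (hence, being bounded below, full) range, so each $T(s)$ is invertible with inverse $R(s)$ and $-A$ really does generate $(R(t))_{t\ge0}$. In the direction (i) $\Rightarrow$ (ii) the corresponding step is harmless, because there both $A-BB^*P$ (bounded perturbation of $A$) and $-P^{-1}A^*P$ are already known to be generators, and one generator contained in another forces equality. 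With that one point repaired, your outline is a faithful reconstruction of the proof in \cite{kes1}.
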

\begin{remark}
In view of above Theorem $A-BB^*P$ is exponentially stable, thus
it follows from \cite[Proposition 2.1]{pr1} that $r(t)$
given by
\begin{eqnarray}
r(t)=\int_t^\infty e^{(s-t)(A^*-PBB^*)}Pf(s)ds \label{3}
\end{eqnarray}
is almost periodic function lying in $AP(Y)$ and is the unique mild solution of
\begin{eqnarray}
r'(t)+(A^*-PBB^*)Pf(s)ds .\label{eq4}
\end{eqnarray}
\end{remark}
\begin{remark}
As $P$ is a bounded linear operator; let $\|P\|=p$.
\end{remark}
Before stating our main results of this paper, let us make the following
assumptions.\\
{\bf Assumptions:}
\begin{itemize}
\item[H1.]  Assume that $-A$ is exponentially stable.
\item[H2.] The pair $(-A,B)$ is exactly controllable.
\end{itemize}

\begin{theorem}
Assume that (H1) and (H2) holds.
Then there exists a unique optimal control for (\ref{eq1}) and (\ref{1}) and it is
given by the feedback law
\begin{eqnarray}
\bar{u}=-B^*(Py+r)
\end{eqnarray}
and
\begin{eqnarray}
J(\bar u)=2\langle r,f\rangle_{aa}-|B^*r|^2_{aa} \label{4}
\end{eqnarray}
where $P$ and $r$ are unique solutions of (\ref{eq3}) and (\ref{eq4}) respectively.
\end{theorem}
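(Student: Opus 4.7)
The strategy is the standard completion-of-squares argument for linear-quadratic regulators, adapted to the almost-automorphic setting by using that $A-BB^{*}P$ is exponentially stable (Theorem~\ref{t3.1}(ii)(c)) and that the function $r$ given by (\ref{3}) lies in $AP(Y)$, so that all relevant Cesaro means are well-defined. The plan is to rewrite $J(u)$, for an arbitrary admissible $u$ with corresponding mild solution $y$ of (\ref{m1}), as a nonnegative strictly convex quadratic in $u+B^{*}(Py+r)$ plus a constant that does not depend on $u$, so that the unique optimal control and the optimal value are read off immediately.

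First I differentiate $\langle Py,y\rangle$ along the dynamics,
$$\frac{d}{dt}\langle Py,y\rangle = \langle(A^{*}P+PA)y,y\rangle + 2\langle u,B^{*}Py\rangle + 2\langle Pf,y\rangle,$$
and use the degenerate Riccati identity (\ref{eq3}) to replace $A^{*}P+PA$ by $PBB^{*}P$, producing $|B^{*}Py|^{2}$. In parallel I differentiate the cross term $\langle r,y\rangle$ using the equation (\ref{eq4}) for $r$ together with $P=P^{*}$, obtaining
$$\frac{d}{dt}\langle r,y\rangle = \langle B^{*}Py,B^{*}r\rangle + \langle u,B^{*}r\rangle - \langle Pf,y\rangle + \langle r,f\rangle.$$
Forming the combination $\frac{d}{dt}\langle Py+2r,y\rangle + |u|^{2}$ cancels the $\langle Pf,y\rangle$ contributions and regroups into the completed-square identity
$$\frac{d}{dt}\langle Py+2r,y\rangle + |u|^{2} = \bigl|u+B^{*}(Py+r)\bigr|^{2} - |B^{*}r|^{2} + 2\langle r,f\rangle.$$

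Next I time-average over $[0,T]$ and let $T\to\infty$. Since $y\in AA(Y)$ and $r\in AP(Y)$ are uniformly bounded and $P,B$ are bounded operators, the boundary contribution $\tfrac{1}{T}\bigl[\langle Py(t)+2r(t),y(t)\rangle\bigr]_{0}^{T}$ vanishes, and after interpreting the remaining Cesaro limits through the $L^{2}_{aa}$ inner product I obtain
$$J(u) = \bigl|u+B^{*}(Py+r)\bigr|^{2}_{aa} + 2\langle r,f\rangle_{aa} - |B^{*}r|^{2}_{aa}.$$
The only $u$-dependent piece on the right is the nonnegative square, and its strict convexity identifies $\bar u = -B^{*}(Py+r)$ as the unique minimizer and (\ref{4}) as the optimal value. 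Admissibility of $\bar u$ is then verified by substituting it into (\ref{m1}) to obtain the closed-loop equation $y'=(A-BB^{*}P)y - BB^{*}r + f$; exponential stability of $A-BB^{*}P$ together with $-BB^{*}r+f\in AA(Y)$ activates the mild-solution theory developed in Section~\ref{s3} and produces a unique $y\in AA(Y)$, whence $\bar u\in L^{2}_{aa}(U)$ as required.

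The main technical obstacle is that $y$ is only a mild, not a classical, solution of (\ref{m1}), so the pointwise differentiations above are not literally justified. I would handle this by a density argument: approximate $f$ (and if necessary $y_{0}$) by smooth data $f_{n}$ for which the corresponding $y_{n}$ is a strict solution, derive the time-integrated version of the completed-square identity for $(u,y_{n})$, and pass to the limit using boundedness of $P$, $B$ together with continuity of the $L^{2}_{aa}$ inner product. A secondary technical point is the existence of the various Cesaro means after combining several almost-automorphic and almost-periodic functions; this relies on the closure of the class of functions with well-defined mean under products and sums, which is already built into the construction of $L^{2}_{aa}(Y)$ in Section~\ref{s3}.
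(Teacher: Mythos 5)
Your proposal is correct and follows essentially the same route as the paper: the completion-of-squares identity obtained by differentiating $\langle Py,y\rangle+2\langle r,y\rangle$, eliminating the $A$-terms via the degenerate Riccati equation (\ref{eq3}), time-averaging and using boundedness of $y$ and $r$ to kill the boundary terms, and justifying the formal differentiation by approximation with strict solutions. Your version is in fact more explicit than the paper's (you write out both derivative computations and the cancellation of the $\langle Pf,y\rangle$ terms), but the argument is the same.
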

\begin{proof}
A proof for this Theorem is similar to that of \cite[Theorem 2.1]{pr1}
But for reader's convenience we give it here once again.
It follows by Theorem \ref{t3.1} that $A-BB^*P$ is exponentially stable,
so $r(\cdot)$ given by equation (\ref{3}) is the unique mild solution of
(\ref{eq4}),
hence $\bar u(\cdot)$ is admissible. Let $u$ be an arbitrary admissible
control and $y$ its response. We differentiate $\langle Py(t),y(t)\rangle
+2\langle r(t),y(t)\rangle$ formally and remove the terms involving $A$
using (\ref{eq3}). Then we integrate from $0$ to $T$, we obtain
\begin{eqnarray}
&&\langle Py(T),y(T)\rangle+2\langle r(T),y(T)\rangle
-\langle Py(0),y(0)\rangle-2\langle r(0),y(0)\rangle\nonumber\\
&=& -\int_0^T|u|^2dt+\int_0^T |(u+B^*(Qy+r))|^2dt
+\int_0^T 2\langle r(t),y(t)\rangle
-|B^*r|^2 dt \nonumber
\end{eqnarray}
Dividing by $T$ and letting $T$ tending to infinity. We obtain
\begin{eqnarray}
J(u)=|(u+B^*(Qy+r))|^2_{aa}+2\langle r,f\rangle_{aa}-|B^*r|^2_{aa} \label{5}
\end{eqnarray}
where we have used the boundedness of $y(T)$ and $r(T)$. This formal
position can be justified by introducing approximating systems of
(\ref{q}) and (\ref{eq4}) with strict solutions \cite{barbu} and then passing to
the limit. See [3], [9] for arguments based on Yosida approximations of
$A$. Now the optimality of $\bar u$ and (\ref{4}) follows easily from (\ref{5}).
Since $A-BB^*P$ is stable, the uniqueness of $\bar u$ follows.
\end{proof}

Let us denote by $L:=A-BB^*P$, if the semigroup $(T_L(t))_{t\ge 0}$ generated by
$L$ is hyperbolic; that is $\sigma(L)\cap i\mathbb{R}=\phi$
\begin{lemma} \label{l4.4}
Given that the semigroup $(T_L(t))_{t\ge 0}$ generated by $L$
is hyperbolic that is $\sigma(L)\cap i\mathbb{R}=\phi,$ then there
exist a unique almost automorphic solution given by:
\begin{eqnarray}
r(t)=\int_{-\infty}^te^{(t-s)L^*}\Pi_s Pf(s)ds-\int_t^\infty e^{(t-s)L^*}\Pi_uPf(s)ds
\end{eqnarray}
of (\ref{eq4}) in $AA(Y)$, whenever $f\in L^2_{aa}(Y)$.
\end{lemma}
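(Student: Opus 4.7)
The plan is to establish four claims in turn: (a) the formula for $r(t)$ defines a bounded continuous function, (b) $r$ is a mild solution of (\ref{eq4}), (c) $r\in AA(Y)$, and (d) uniqueness. All four rest on the hyperbolic estimates for $(T_L(t))_{t\ge 0}$, which by duality transfer to the adjoint semigroup generated by $L^*$: there exist $M,\delta>0$ with $\|e^{tL^*}\Pi_s x\|\le Me^{-\delta t}\|x\|$ for $t\ge 0$ and $\|e^{tL^*}\Pi_u x\|\le Me^{\delta t}\|x\|$ for $t\le 0$, where I continue to write $\Pi_s,\Pi_u$ for the adjoint projections attached to $L^*$. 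Since $P\in\mathcal{L}(Y)$, we may assume the representative of $f$ at hand lies in $AA(Y)$, so $Pf$ is bounded by $\|P\|\,\|f\|_\infty$.

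For (a), the exponential bounds applied pointwise make each integrand integrable on its half-line, giving $\|r(t)\|\le 2M\|P\|\,\|f\|_\infty/\delta$ uniformly in $t$; continuity follows from dominated convergence. For (b), a routine computation using the semigroup property together with $I=\Pi_s+\Pi_u$ shows that $r(t)=e^{(t-t_0)L^*}r(t_0)+\int_{t_0}^t e^{(t-\tau)L^*}Pf(\tau)\,d\tau$ for every $t\ge t_0$, which is exactly the mild-solution identity for (\ref{eq4}); the boundary contributions at $\pm\infty$ vanish by the same exponential estimates.

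The almost automorphy (c) is the crux. After the changes of variables $\sigma=t-s$ in the first integral and $\sigma=s-t$ in the second,
\[
r(t)=\int_0^\infty e^{\sigma L^*}\Pi_s Pf(t-\sigma)\,d\sigma-\int_0^\infty e^{-\sigma L^*}\Pi_u Pf(t+\sigma)\,d\sigma,
\]
the shift $t\mapsto t+s_n$ acts only on the argument of $Pf$. Given a sequence $(s_n')$, I would use almost automorphy of $Pf\in AA(Y)$ to extract a subsequence $(s_n)$ such that $Pf(t+s_n)\to g(t)$ and $g(t-s_n)\to Pf(t)$ pointwise for every $t$. Each integrand then converges pointwise and is dominated by $M\|P\|\,\|f\|_\infty e^{-\delta\sigma}$, so Lebesgue's theorem yields $r(t+s_n)\to\tilde r(t)$ with $\tilde r$ given by the same formula with $g$ in place of $Pf$; the reverse passage $\tilde r(t-s_n)\to r(t)$ is identical, so $r\in AA(Y)$.

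For uniqueness (d), the difference $w=r_1-r_2$ of two almost automorphic solutions of (\ref{eq4}) satisfies the homogeneous equation $w'=L^*w$, so $w(t)=e^{(t-s)L^*}w(s)$ for every $t\ge s$. Projecting, $\|\Pi_s w(t)\|\le Me^{-\delta(t-s)}\|w\|_\infty\to 0$ as $s\to-\infty$, while running backwards on the unstable part gives $\|\Pi_u w(t)\|\to 0$ as $s\to+\infty$; hence $w\equiv 0$. I expect the dominated-convergence step in (c) to be the delicate one: it is precisely the combination of the hyperbolic bound $e^{-\delta\sigma}$ with the uniform $L^\infty$ bound on $Pf$ that lets the almost-automorphic property pass from $f$ to $r$, and any attempt to work directly at the $L^2_{aa}$ level without first isolating a bounded representative would require an additional density and extension argument.
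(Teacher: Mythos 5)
Your proposal is correct for a bounded representative $f\in AA(Y)$, but it takes a genuinely different route from the paper's proof of Lemma \ref{l4.4}, and is in fact more complete. The paper proves only the almost-automorphy claim: it forms $r(t+s_n-s_m)-r(t)$, changes variables, and then estimates via Cauchy--Schwarz against the $|\cdot|_{aa}$ seminorm, using the hyperbolic bounds to pull out the factor $Np/\delta$ and concluding that the difference is at most $\tfrac{2Np}{\delta}\varepsilon$; it never verifies the mild-solution identity or uniqueness. You instead use the pointwise Bochner criterion --- extract $(s_n)$ so that $Pf(t+s_n)\to g(t)$ and $g(t-s_n)\to Pf(t)$, then pass to the limit under the integral by dominated convergence with the dominating function $M\|P\|\|f\|_\infty e^{-\delta\sigma}$ --- and you add the boundedness/continuity check, the semigroup computation showing $r$ satisfies the mild-solution identity for (\ref{eq4}), and the dichotomy argument ($\Pi_s$ part decays as the initial time goes to $-\infty$, $\Pi_u$ part by inverting forward in time) for uniqueness. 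What your approach buys is a clean, fully rigorous argument for $f\in AA(Y)$ together with the solution and uniqueness claims the paper leaves unproved; what the paper's norm-level approach is \emph{intended} to buy is applicability to $f\in L^2_{aa}(Y)$ as stated in the lemma, although its key inequality $\|r(t+s_n-s_m)-r(t)\|_\infty\le\|r(t+s_n-s_m)-r(t)\|_{aa}$ is not justified (a sup norm is not in general dominated by a quadratic-mean seminorm), so the paper does not actually close the gap you flag at the end: for a genuine completion element of $L^2_{aa}(Y)$ the conclusion $r\in AA(Y)$ still requires the density/extension argument you mention, in both your proof and the paper's.
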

\begin{proof}
We consider
\begin{eqnarray}
r(t+s_n-s_m)-r(t)=
\int_{-\infty}^te^{(t-s)L^*}\Pi_s P\Big(f(s+s_n-s_m)-f(s)\Big)ds\nonumber\\
-\int_t^\infty e^{(t-s)L^*}\Pi_u P\Big(f(s+s_n-s_m)-f(s)\Big)ds \nonumber
\end{eqnarray}
Letting $s\mapsto (t-s)$ in the right hand side; we get
\begin{eqnarray}
r(t+s_n-s_m)-r(t)&=&
\int_0^{\infty}e^{sL^*}\Pi_s P\Big(f(t-s+s_n-s_m)-f(t-s)\Big)ds\nonumber\\
&&-\int_{-\infty}^0 e^{sL^*}\Pi_u P\Big(f(t-s+s_n-s_m)-f(t-s)\Big)ds \nonumber
\end{eqnarray}
Evaluating using Cauchy-Schwarz inequality
\begin{eqnarray}
&&\|r(t+s_n-s_m)-r(t)\|_\infty \nonumber\\
&\le& \|r(t+s_n-s_m)-r(t)\|_{aa} \nonumber\\
&\le& \Big(\int_0^\infty Npe^{-\delta s}ds\Big)
\Big(\lim_{T\mapsto \infty}\frac{1}{T}\int_0^T\|f(t-s+s_n-s_m)-f(t-s)\|^2_Yds\Big)^{\frac{1}{2}}\nonumber\\
&&+\Big(\int_{-\infty}^0 Npe^{\delta s}ds\Big)
\Big(\lim_{T\mapsto \infty}\frac{1}{T}\int_0^T\|f(t+s+s_n-s_m)-f(t+s)\|_Y^2ds\Big)^{\frac{1}{2}} \nonumber\\
&\le& \frac{Np}{\delta}(\|f(t-s+s_n-s_m)-f(t-s)\|_{aa}+\|f(t+s+s_n-s_m)-f(t+s)\|_{aa}) \nonumber\\
&\le& \frac{2Np}{\delta}\varepsilon:=\varepsilon' \nonumber
\end{eqnarray}
This implies that $r\in AA(Y)$ is an almost automorphic function.
\end{proof}
Rewriting equation (\ref{eq1}) as:
\begin{eqnarray}
\dot{y}&=&Ay-BB^*Py-BB^*r+f \nonumber\\
&=&(A-BB^*P)y-BB^*r+f   \nonumber\\
&=& Ly+\Big(f(t)-BB^*r(t)\Big) \nonumber
\end{eqnarray}
Equation (\ref{eq1}) can be written as:
\begin{eqnarray}
\dot{y}(t)=Ly+\Big(f(t)-BB^*r(t)\Big) \nonumber
\end{eqnarray}
\begin{definition}
A bounded continuous function $y: \mathbb{R}\rightarrow Y$ is called mild solution to
equation (\ref{eq1}) if
\begin{eqnarray}
y(t)=T_L(t-s)y(s)+\int_s^t T_L(t-s)\Big(f(s)-BB^*r(s)\Big) ds \nonumber
\end{eqnarray}
$t\ge s, \ s\in \mathbb{R}$.
\end{definition}
\begin{theorem}
Suppose that $-A$ is exponentially stable and the pair $(-A,B)$ is
exactly controllable and the semigroup $(T_L(t))_{t\ge 0}$ generated by $A-BB^*P$
is hyperbolic semigroup. Then for $f\in L^2_{aa}(Y)$, there exists a unique mild
solution to (\ref{eq1}) given by
\begin{eqnarray}
y(t)=\int_{-\infty}^t T_L(t-s)\Pi_s\Big(f(s)-BB^*r(s)\Big)ds-
\int_t^{\infty} T_L(t-s)\Pi_u\Big(f(s)-BB^*r(s)\Big)ds\nonumber
\end{eqnarray}
for all $t\in \mathbb{R}$, where $\Pi_s$ and $\Pi_u$ are the projections associated
to the operator $L$. Furthermore, this mild solution belongs to $AA(Y_\alpha)$.
\end{theorem}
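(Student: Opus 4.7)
The plan is to adapt the argument of Lemma~\ref{l4.4} to the intermediate space $Y_\alpha$, using the sharper decay estimates of Lemma~\ref{l2.8}. Setting $g(t):=f(t)-BB^*r(t)$, Lemma~\ref{l4.4} ensures $r\in AA(Y)$, so $g$ inherits almost automorphic regularity from $f$ and is bounded in $Y$; everything downstream is driven by $g$ together with the stable/unstable splitting induced by the hyperbolic semigroup $(T_L(t))_{t\ge 0}$.

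The first step is to verify that the candidate
\[
y(t)=\int_{-\infty}^t T_L(t-s)\Pi_s g(s)\,ds-\int_t^\infty T_L(t-s)\Pi_u g(s)\,ds
\]
defines an element of $Y_\alpha$ for every $t\in\mathbb{R}$. A change of variable $\sigma=t-s$ (resp.\ $\sigma=s-t$) reduces the two integrands to $\|T_L(\sigma)\Pi_s g(t-\sigma)\|_\alpha$ and $\|T_L(-\sigma)\Pi_u g(t+\sigma)\|_\alpha$; Lemma~\ref{l2.8} bounds these by $M(\alpha)\sigma^{-\alpha}e^{-\gamma\sigma}\|g\|_\infty$ and $c(\alpha)e^{-\delta\sigma}\|g\|_\infty$ respectively, both giving absolutely convergent integrals since $\alpha\in(0,1)$.

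The second step is to check the mild-solution identity and uniqueness. Using the semigroup law $T_L(t-s)=T_L(t-\tau)T_L(\tau-s)$, the commutation of $T_L$ with $\Pi_s,\Pi_u$, and the decomposition $g=\Pi_s g+\Pi_u g$, the defining formula can be rearranged to yield $y(t)=T_L(t-s)y(s)+\int_s^t T_L(t-\tau)g(\tau)\,d\tau$ for all $t\ge s$. Uniqueness within the class of bounded continuous solutions follows from the dichotomy: if $z$ is the difference of two such solutions, then $z(t)=T_L(t-s)z(s)$; projecting onto $\Pi_s$ and letting $s\to-\infty$ kills the stable part, while projecting onto $\Pi_u$ and letting $s\to+\infty$ (using the backward-time decay of $T_L(t)\Pi_u$) kills the unstable part, forcing $z\equiv 0$.

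The last step, which I expect to be the delicate point, is the almost automorphy of $y$ in $Y_\alpha$. Given a sequence $(s_n')$, extract a subsequence $(s_n)$ along which $f$, and hence by Lemma~\ref{l4.4} also $r$ and therefore $g$, satisfies the Cauchy-type condition defining $AA(Y)$. Then
\begin{align*}
y(t+s_n-s_m)-y(t) &= \int_0^\infty T_L(\sigma)\Pi_s\bigl(g(t-\sigma+s_n-s_m)-g(t-\sigma)\bigr)\,d\sigma\\
&\quad-\int_0^\infty T_L(-\sigma)\Pi_u\bigl(g(t+\sigma+s_n-s_m)-g(t+\sigma)\bigr)\,d\sigma,
\end{align*}
and Lemma~\ref{l2.8}, combined with a Cauchy--Schwarz estimate in the style of Lemma~\ref{l4.4}, bounds the $Y_\alpha$-norm of the right-hand side by a constant times the oscillation of $g$ along the shifted sequence, which vanishes as $n,m\to\infty$. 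The main obstacle is precisely this estimate: the singular factor $\sigma^{-\alpha}$ coming from Lemma~\ref{l2.8} forces a careful splitting of the $\Pi_s$-integral near $\sigma=0$ when trading the pointwise $Y_\alpha$ bound against the $L^2_{aa}$-type control on the forcing (the naive square-integrability of the kernel holds only for $\alpha<1/2$), and the selection of $(s_n)$ must be compatible with the one used in Lemma~\ref{l4.4} to produce $r$. Once those bookkeeping issues are resolved, the remaining computations are routine.
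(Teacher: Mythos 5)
Your proposal takes essentially the same approach as the paper: the paper's proof consists precisely of your third step (the change of variables, the decay estimates of Lemma \ref{l2.8} combined with Lemma \ref{l4.4} for $r$, and a Cauchy--Schwarz-type bound on the oscillation of $f$ and $r$), while your first two steps (well-definedness in $Y_\alpha$, the mild-solution identity, and uniqueness via the dichotomy) are simply asserted there without argument. The kernel-integrability subtlety you flag for the $\Pi_s$-integral is genuine, but the paper sidesteps it by using only the $L^1$ bound $\int_0^\infty s^{-\alpha}e^{-\gamma s}\,ds=\gamma^{\alpha-1}\Gamma(1-\alpha)$ on the singular factor.
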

\begin{proof}
\begin{eqnarray}
y(t+s_n-s_m)-y(t)&=&\int_{-\infty}^{t+s_n-s_m} T_L(t+s_n-s_m-s)\Pi_s(f(s)-BB^*r(s))ds \nonumber\\
&&-\int_{-\infty}^t T_L(t-s)\Pi_s(f(s)-BB^*r(s))ds\nonumber\\
&&-\int_{t+s_n-s_m}^\infty T_L(t+s_n-s_m-s)\Pi_u(f(s)-BB^*r(s))ds \nonumber\\
&&+\int_t^{\infty} T_L(t-s)\Pi_u(f(s)-BB^*r(s))ds\nonumber
\end{eqnarray}
Using transformations $s\mapsto -s+t+s_n-s_m$ in $1^{st}$ and $3^{rd}$ terms
and $s\mapsto -s+t$ in $2^{nd}$ and $4^{th}$ terms in right hand side;
\begin{eqnarray}
y(t+s_n-s_m)-y(t)&=&\int_0^{\infty}T_L(s)\Pi_s\bigg(\Big(f(-s+t+s_n-s_m)-f(-s+t)\Big)\nonumber\\
&&-BB^*\Big(r(-s+t+s_n-s_m)-r(-s+t)\Big)\bigg)ds \nonumber\\
&&-\int_{-\infty}^0 T_L(s)\Pi_u\bigg(\Big(f(-s+t+s_n-s_m)-f(-s+t)\Big)\nonumber\\
&&-BB^*\Big(r(-s+t+s_n-s_m)-r(-s+t)\Big)\bigg)ds \nonumber
\end{eqnarray}
We have that $\|B\|=\|B^*\|:=K$ (lets say), for some constant $K>0$, follows
from \cite[Proposition 7.2.1]{kes2}.
Hereafter we use the following conventions
\begin{eqnarray}
\|x\|_{\infty,\alpha}:=\sup_{t\in \mathbb{R}}\|x\|_\alpha \mbox{ \ and \ }
\|x\|_{\alpha,(aa)}=\lim_{T\rightarrow \infty}\frac{1}{T}\int_0^T\|x\|^2_\alpha\nonumber.
\end{eqnarray}
Now we consider the $\alpha$-norm defined in (\ref{eq}) of $\Big(y(t+s_n-s_m)-y(t)\Big)$
and using the inequalities in Lemma \ref{l2.8}, we have:
\begin{eqnarray}
&&\|y(t+s_n-s_m)-y(t)\|_{\infty,\alpha} \nonumber\\
&\le&\|y(t+s_n-s_m)-y(t)\|_{\alpha,(aa)} \nonumber\\
&\le& M(\alpha)\Bigg[\Big(\int_0^\infty s^{-\alpha}e^{-\gamma s}ds\Big)
\Big(\lim_{T\rightarrow \infty}\frac{1}{T}\int_0^T\|f(t-s+s_n-s_m)-f(t-s)\|_Y^2ds\Big)^{\frac{1}{2}}\nonumber\\
&&+ \|B\|^2\|r(t-s+s_n-s_m)-r(t-s)\|_{AA(Y)}\Bigg]\nonumber\\
&&+c(\alpha)\Bigg[ \Big(\int_{-\infty}^0e^{\delta s}ds\Big)
\bigg(\lim_{T\rightarrow \infty}\frac{1}{T}\int_0^T\|f(t-s+s_n-s_m)-f(t-s)\|
_Y^2ds\bigg)^{\frac{1}{2}}\nonumber\\
&&+ \|B\|^2\|r(t-s+s_n-s_m)-r(t-s)\|_{AA(Y)}\Bigg]\nonumber
\end{eqnarray}

With the help of Lemma \ref{l4.4} we have
\begin{eqnarray}
&&\|y(t+s_n-s_m)-y(t)\|_\alpha \nonumber\\
&\le& M(\alpha)(\gamma^{\alpha-1}\Gamma(1-\alpha)\|f(t-s+s_n-s_m)-f(t-s)\|
_{aa}+K^2\varepsilon')\nonumber\\
&&+\frac{c(\alpha)}{\delta}(\|f(t-s+s_n-s_m)-f(t-s)\|_{aa}+K^2 \varepsilon'). \nonumber\\
&\le& M(\alpha)(\gamma^{\alpha-1}\Gamma(1-\alpha)\varepsilon+K^2\varepsilon')
+\frac{c(\alpha)}{\delta}(\varepsilon+K^2 \varepsilon'). \nonumber
\end{eqnarray}
Which shows that $y$ lies in the space $AA(Y_\alpha)$; and hence we have the
desired result.
\end{proof}
Here we consider a simple example which is similar as considered in \cite{pr1}.
\begin{example}
A deterministic example. \\

\noindent Let us take $Y=U=\mathbb{R}$ and $A=3, \
 B=4, \ M=1$ and $f(t)=\sin t$. Then equation (\ref{m1}) is:
\begin{eqnarray}
y'=3y+4u+\sin t. \nonumber
\end{eqnarray}
The solution of (\ref{eq3}) which is nonnegative is $P=\frac{1}{2}$.
Then $r(t)=\frac{1}{52}[\cos t+5 \sin t]$.
It is easy to obtain
\begin{eqnarray}
2\langle r, f\rangle_{aa}=\frac{5}{52}, \quad |B^*r|^2_{aa}=\frac{4}{52}. \nonumber
\end{eqnarray}
Thus the optimal control is given by
\begin{eqnarray}
\bar{u}=-2y-\frac{1}{13}(\cos t+5\sin t) \nonumber
\end{eqnarray}
and
\begin{eqnarray}
J(\bar u)=\frac{1}{52}. \nonumber
\end{eqnarray}
Here $f$ is periodic, but we may add $\sin\sqrt 2t$
then it becomes an almost periodic function and we may also
calculate $\bar u$ and $J(\bar u)$ similarly by choosing $f$ an
almost automorphic function given by:
\begin{eqnarray}
f(t)=\sin{\frac{1}{2+\cos t+\cos \sqrt 2 t}} . \nonumber
\end{eqnarray}
\end{example}

\end{document}